\title{On a special kind of improper integral}
\author{Alexandru Bratosin}
\date{January 2019}
\newtheorem*{theorem}{Theorem}
\newtheorem{lemma}{Lemma}
\theoremstyle{definition}
\newtheorem{definition}{Definition}
\begin{document}

\maketitle

\begin{abstract}
%In the world of mathematical analysis, many counterintuitive answers arise from the manipulation of %seemingly unrelated concepts, ideas, or functions. For example, Euler showed that \(e^{i\pi} + 1 = %0\), whereas Gauss proved that the area underneath \(y = e^{-x^2}\) spanning the whole real axis %is \( \sqrt{\pi} \). In this paper, we will prove the following theorem:
%\begin{theorem}
%For all \( n \in \mathbb{R} \text{, with}\ n > 1 \text{,} \)
%\begin{equation*}
%I_n = \int_{0}^{\infty} \frac{\ln{x}}{x^n+1} dx = -\frac{\pi^2}{n^2}\cot{\frac{\pi}{n}}%\csc{\frac{\pi}{n}} = -\frac{d}{dn} \Bigg[ \Gamma\Big(1-\frac{1}{n}\Big) \Gamma\Big(\frac{1}{n}%%\Big) \Bigg].
%\end{equation*}
%\end{theorem}
\mbox{}\par\vspace{-\baselineskip}
\paragraph{Introduction}
In the world of mathematical analysis, many counterintuitive answers arise from the manipulation of seemingly unrelated concepts, ideas, or functions. For example, Euler showed that \(e^{i\pi} + 1 = 0\), whereas Gauss proved that the area underneath \(y = e^{-x^2}\) spanning the whole real axis is \( \sqrt{\pi} \). In this paper, we will determine the closed-form solution of the improper integral
%\begin{theorem}
%For all \( n \in \mathbb{R} \text{, with}\ n > 1 \text{,} \)
\[
I_n = \int_{0}^{\infty} \frac{\ln{x}}{x^n+1} dx, \ \forall n \in \mathbb{R} \text{, with}\ n > 1.
\]
Determining closed-form solutions of improper integrals have real implications not only in easing the solving of similar, yet more difficult integrals, but also in speeding up numerical approximations of the answer by making them more efficient.
\paragraph{Result} Following our calculations, we derived the formula
\[
I_n = \int_{0}^{\infty} \frac{\ln{x}}{x^n+1} dx = -\frac{\pi^2}{n^2}\cot{\frac{\pi}{n}}\csc{\frac{\pi}{n}} = -\frac{d}{dn} \Bigg[ \Gamma\Big(1-\frac{1}{n}\Big) \Gamma\Big(\frac{1}{n}\Big) \Bigg].
\]
Depending on the value of \(n\), one may come up with the following intriguing identities:
\begin{align*}
I_2 & = \int_{0}^{\infty} \frac{\ln{x}}{x^2+1} dx = 0 \text{,} \\
I_3 & =\int_{0}^{\infty} \frac{\ln{x}}{x^3+1} dx = -\frac{2\pi^2}{27} \text{,} \\
I_4 & = \int_{0}^{\infty} \frac{\ln{x}}{x^4+1} dx = -\frac{\pi^2}{8\sqrt{2}} \text{,} \\
& \dots \\
\lim_{n \to \infty} I_n & = -1.
\end{align*}
\end{abstract}

\pagebreak

\section{Introduction}

To prove the theorem, we will make use of the following definitions and lemmas. Books \cite{gamma_1}-\cite{gamma_6} should help familiarize with the notions used in this document.

\begin{definition}[{\cite[p. 255]{gamma_3}}]
The gamma function \( \Gamma(z) \) is defined as the analytic continuation of the factorial to complex arguments, with
\[ \Gamma(z) = (z-1)! \]
One of its integral representations \textit{(Euler integral of the second kind)} is
\[ \Gamma(z) = \int_{0}^{\infty} t^{z - 1}e^{-t} dt \text{,} \ \forall z \in \mathbb{C} \text{, with Re} \ z > 0, \]
with the subsequent reflection formula
\[ \Gamma(1-z)\Gamma(z) = \frac{\pi}{\sin \pi z} \text{,} \ \forall z \in \mathbb{C} \setminus \mathbb{Z}. \]
\end{definition}
\noindent Due to the difficulties in analyzing the large and rapidly-increasing function \( \Gamma'(z) \), its logarithmic derivative \( \psi(z) \) is studied instead.
\begin{definition}[{\cite[p. 258]{gamma_3}}]
The digamma function \( \psi(z) \) is defined as
\[ \psi(z) = \frac{d}{dz}\ln\Gamma(z). \]
An integral representation due to Gauss is
\[ \psi(z) = \int_{0}^{\infty} \frac{e^{-t}}{t}\frac{e^{-zt}}{1-e^{-t}} dt \text{,} \ \forall z \in \mathbb{C} \text{, with Re} \ z > 0. \]
\end{definition}

\begin{definition}[{\cite[p. 260]{gamma_3}}]
The polygamma function \( \psi^{(n)}(z) \) is defined as the \(n\)-th derivative of the digamma function, i.e.
\[ \psi^{(n)}(z) = \frac{d^n}{dz^n}\psi(z) = \frac{d^{n+1}}{dz^{n+1}}\ln\Gamma(z). \]
Using Gauss's integral representation of the digamma function
\[ \psi(z) = \frac{d}{dz} \ln{\Gamma(z)} = \int_{0}^{\infty} \frac{e^{-t}}{t} - \frac{e^{-zt}}{1-e^{-t}} dt, \]
we differentiate \( \psi(z) \ n \) times with respect to \( z \) to get the polygamma function
\[ \psi^{(n)}(z) = \frac{d^n}{dz^n}\psi(z) = \frac{d^n}{dz^n} \int_{0}^{\infty} \frac{e^{-t}}{t} - \frac{e^{-zt}}{1-e^{-t}} dt \text{, } \forall n \in \mathbb{N}^{*}. \]
Using Leibniz's rule for differentiating under the integral sign, we get the integral representation of the polygamma function
\[ \psi^{(n)}(z) = \int_{0}^{\infty} \frac{\partial^n}{\partial z^n} \Bigg[ \frac{e^{-t}}{t} - \frac{e^{-zt}}{1-e^{-t}} \Bigg] dt = (-1)^{n+1} \int_{0}^{\infty} \frac{t^ne^{-zt}}{1-e^{-t}} dt. \]
\end{definition}

\begin{definition}
The trigamma function \( \psi^{(1)}(z) \) is defined as
\[ \psi^{(1)}(z) = \frac{d}{dz}\psi(z) = \int_{0}^{\infty} \frac{te^{-zt}}{1-e^{-t}} dt. \]
\end{definition}

\begin{lemma}
For all \( z \in \mathbb{C} \text{, with} \ \text{Re}\ z > 0 \text{,} \)
\[
\int_{-\infty}^{\infty} \frac{t^ne^{-zt}}{1-e^{-t}} dt = \psi^{(n)}(1-z) + (-1)^{n+1}\psi^{(n)}(z).
\]
\end{lemma}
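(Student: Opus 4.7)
The plan is to split the bilateral integral at the origin and treat each half by reducing it, via a sign flip and an algebraic identity, to the polygamma integral representation
\[
\psi^{(n)}(w) = (-1)^{n+1}\int_{0}^{\infty} \frac{t^{n}e^{-wt}}{1-e^{-t}}\,dt,\qquad \text{Re}\,w>0,
\]
already established in the preceding definition.

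First I would write
\[
\int_{-\infty}^{\infty}\frac{t^{n}e^{-zt}}{1-e^{-t}}\,dt
\;=\;\int_{0}^{\infty}\frac{t^{n}e^{-zt}}{1-e^{-t}}\,dt
\;+\;\int_{-\infty}^{0}\frac{t^{n}e^{-zt}}{1-e^{-t}}\,dt.
\]
The first piece is immediate from the polygamma formula and equals $(-1)^{n+1}\psi^{(n)}(z)$. For the second piece I substitute $t\mapsto -t$, which introduces a factor $(-1)^{n}$ from $t^{n}$ and changes the limits back to $(0,\infty)$, leaving the integrand $\dfrac{t^{n}e^{zt}}{1-e^{t}}$. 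The key algebraic step is the identity $1-e^{t}=-e^{t}(1-e^{-t})$, so that
\[
\frac{e^{zt}}{1-e^{t}}=-\frac{e^{(z-1)t}}{1-e^{-t}}=-\frac{e^{-(1-z)t}}{1-e^{-t}}.
\]
After substitution the second piece becomes
\[
(-1)^{n+1}\int_{0}^{\infty}\frac{t^{n}e^{-(1-z)t}}{1-e^{-t}}\,dt,
\]
which is exactly $\psi^{(n)}(1-z)$ by the polygamma representation applied at $w=1-z$. Summing the two halves yields the asserted identity.

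The main thing to be careful about is the domain of validity: the right-hand polygamma representation needs $\text{Re}\,z>0$ for the $(0,\infty)$ piece, but it needs $\text{Re}(1-z)>0$ for the $(-\infty,0)$ piece, which together force $0<\text{Re}\,z<1$ for the bilateral integral to converge absolutely (this is also evident from the asymptotics at $t\to-\infty$, where $\frac{t^{n}e^{-zt}}{1-e^{-t}}$ behaves like $-t^{n}e^{-(z-1)t}$). I would make this restriction explicit, note that the statement of the lemma is then meant in this strip, and point out that the two sides, being holomorphic in $z$, agree by analytic continuation wherever both sides make sense. The only genuine obstacle is the sign bookkeeping in the substitution; everything else is a direct appeal to the already-proved integral formula for $\psi^{(n)}$.
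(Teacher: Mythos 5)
Your proof is correct and follows essentially the same route as the paper: split the bilateral integral at the origin, read off $(-1)^{n+1}\psi^{(n)}(z)$ on $(0,\infty)$ from the polygamma representation, and use the substitution $t\mapsto -t$ together with $1-e^{t}=-e^{t}\bigl(1-e^{-t}\bigr)$ to identify the $(-\infty,0)$ piece as $\psi^{(n)}(1-z)$. Your added remark that the bilateral integral in fact requires $0<\mathrm{Re}\,z<1$ (not merely $\mathrm{Re}\,z>0$ as the lemma states) is a correct refinement that the paper omits.
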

\begin{proof}
Using the linearity of the integral, we split it into a sum of two integrals.
\begin{equation}
\int_{-\infty}^{\infty} \frac{t^ne^{-zt}}{1-e^{-t}} dt = \int_{-\infty}^{0} \frac{t^ne^{-zt}}{1-e^{-t}} dt + \int_{0}^{\infty} \frac{t^ne^{-zt}}{1-e^{-t}} dt.
\end{equation}
From Definition 3, the right-hand summand of (1) is \( (-1)^{n+1} \psi^{(n)}(z). \)
\begin{equation}
\int_{0}^{\infty} \frac{t^ne^{-zt}}{1-e^{-t}} dt = (-1)^{n+1}\psi^{(n)}(z).
\end{equation}
For the left-hand summand of (1), substitute \( u = -t \text{, with}\ du = -dt. \)
\[
\int_{-\infty}^{0} \frac{t^ne^{-zt}}{1-e^{-t}} dt = (-1)^n \int_{0}^{\infty} \frac{u^ne^{zu}}{1-e^{u}} du.
\]
Multiply the resulting integrand by \( \frac{e^{-u}}{e^{-u}}. \)
\[
\int_{-\infty}^{0} \frac{t^ne^{-zt}}{1-e^{-t}} dt = (-1)^n \int_{0}^{\infty} \frac{u^ne^{-u(1-z)}}{-1+e^{u}} du.
\]
Factor \( -1 \) from its denominator.
\[
\int_{-\infty}^{0} \frac{t^ne^{-zt}}{1-e^{-t}} dt = (-1)^{n+1}\int_{0}^{\infty} \frac{ue^{-u(1-z)}}{1-e^{-u}} du.
\]
From Definition 3, the right-hand side is \( \psi^{(n)}(1-z) \).
\begin{equation}
\int_{-\infty}^{0} \frac{t^ne^{-zt}}{1-e^{-t}} dt = \psi^{(n)}(1-z).
\end{equation}
Therefore, using the results from (2) and (3) in (1), we get
\[
\int_{-\infty}^{\infty} \frac{t^ne^{-zt}}{1-e^{-t}} dt = \psi^{(n)}(1-z) + (-1)^{n+1}\psi^{(n)}(z).
\]
\end{proof}
\pagebreak

\begin{lemma}
For all \( z \in \mathbb{C} \text{, with} \ \text{Re}\ z > 0 \text{,} \)
\begin{equation*}
\psi^{(n)}(1-z) + (-1)^{n+1}\psi^{(n)}(z) = (-1)^n\pi\frac{d^n}{dz^n}\cot{\pi z}.
\end{equation*}
\end{lemma}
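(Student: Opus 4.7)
The plan is to derive the identity by repeatedly differentiating the logarithmic form of the reflection formula stated in Definition 1. Since both sides of the claimed identity involve $n$-th derivatives of well-understood objects (the digamma function and $\cot \pi z$), it is natural to aim for the identity in the case $n=0$ (or $n=1$) first and then differentiate $n$ times.

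\textbf{Step 1: Reduce to a first-order identity.} Starting from $\Gamma(1-z)\Gamma(z) = \pi/\sin\pi z$, I take the logarithm on both sides,
\[
\ln \Gamma(1-z) + \ln \Gamma(z) = \ln \pi - \ln \sin \pi z,
\]
and differentiate once in $z$. Using $\psi(z) = \tfrac{d}{dz}\ln\Gamma(z)$ and the chain rule on $\ln\Gamma(1-z)$, this yields
\[
-\psi(1-z) + \psi(z) = -\pi \cot \pi z,
\]
i.e.\ $\psi(1-z) - \psi(z) = \pi \cot \pi z$, which is the $n=0$ case of the claim (up to the sign bookkeeping).

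\textbf{Step 2: Differentiate $n$ times.} I then differentiate both sides of $\psi(1-z) - \psi(z) = \pi \cot \pi z$ exactly $n$ more times with respect to $z$. Each derivative of $\psi(1-z)$ contributes a factor of $-1$ via the chain rule, so after $n$ differentiations
\[
\tfrac{d^n}{dz^n}\psi(1-z) = (-1)^n \psi^{(n)}(1-z),
\]
while $\tfrac{d^n}{dz^n}\psi(z) = \psi^{(n)}(z)$. Thus
\[
(-1)^n \psi^{(n)}(1-z) - \psi^{(n)}(z) = \pi \tfrac{d^n}{dz^n}\cot\pi z.
\]

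\textbf{Step 3: Rearrange signs.} Multiplying through by $(-1)^n$ and using $-(-1)^n = (-1)^{n+1}$ gives exactly
\[
\psi^{(n)}(1-z) + (-1)^{n+1}\psi^{(n)}(z) = (-1)^n \pi \tfrac{d^n}{dz^n}\cot\pi z,
\]
as required. There is no real obstacle here: the only subtlety is tracking the signs coming from the chain rule on $\psi(1-z)$, and keeping in mind that the reflection formula (and hence everything derived from it) is valid on $\mathbb{C}\setminus\mathbb{Z}$, which comfortably contains the half-plane $\operatorname{Re} z > 0$ minus the positive integers where the polygamma functions are holomorphic anyway.
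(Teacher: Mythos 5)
Your proposal is correct and follows essentially the same route as the paper: take logarithms of the reflection formula $\Gamma(1-z)\Gamma(z)=\pi/\sin\pi z$, differentiate once to obtain $\psi(1-z)-\psi(z)=\pi\cot\pi z$, differentiate $n$ more times while tracking the chain-rule sign $(-1)^n$ on $\psi^{(n)}(1-z)$, and multiply through by $(-1)^n$ to reach the stated identity. The sign bookkeeping and the remark about validity on $\mathbb{C}\setminus\mathbb{Z}$ are both in order, so nothing further is needed.
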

\begin{proof}
Using the reflection formula of the gamma function
\[
\Gamma(1-z)\Gamma(z) = \frac{\pi}{\sin \pi z},
\]
take the natural logarithm of both sides.
\[
\ln{\Gamma(1-z)}+\ln{\Gamma(z)} = \ln{\pi} - \ln(\sin \pi z).
\]
Differentiate both sides with respect to \( z \).
\[
\frac{d}{dz}\ln\Gamma(1-z)+\frac{d}{dz}\ln\Gamma(z) = -\pi\cot(\pi z).
\]
The above is equivalent to
\[
-\psi(1-z) + \psi(z) = -\pi\cot(\pi z).
\]
Differentiate both sides \( n \) times with respect to \( z \).
\[
(-1)^{n+1}\psi^{(n)}(1-z) + \psi^{(n)}(z) = -\pi \frac{d^n}{dz^n} \cot(\pi z).
\]
\[
(-1)^{n+2}\psi^{(n)}(1-z) + (-1)\psi^{(n)}(z) = \pi \frac{d^n}{dz^n} \cot(\pi z).  \\
\]
\[
\therefore \psi^{(n)}(1-z) + (-1)^{n+1}\psi^{(n)}(z) = (-1)^n\pi \frac{d^n}{dz^n} \cot(\pi z). \\
\]
\end{proof}
%\begin{align*}
%\Gamma(1-z)\Gamma(z) & = \frac{\pi}{\sin \pi z} \\
%\text{Take the natural logarithm of both sides} \\
%\ln{\Gamma(1-z)}+\ln{\Gamma(z)} & = \ln{\pi} - \ln(\sin \pi z) \\
%\text{Differentiate both sides with respect to} \ z \\
%\frac{d}{dz}\ln\Gamma(1-z)+\frac{d}{dz}\ln\Gamma(z) & = -\pi\cot(\pi z) \\
%\text{Which is equivalent to} \\
%-\psi(1-z) + \psi(z) & = -\pi\cot(\pi z) \\
%\text{Differentiate both sides} \ n \ \text{times with respect to} \ z \\
%(-1)^{n+1}\psi^{(n)}(1-z) + \psi^{(n)}(z) & = -\pi \frac{d^n}{dz^n} \cot(\pi z) \\
%(-1)^{n+2}\psi^{(n)}(1-z) + (-1)\psi^{(n)}(z) & = \pi \frac{d^n}{dz^n} \cot(\pi z)  \\
%\psi^{(n)}(1-z) + (-1)^{n+1}\psi^{(n)}(z) & = (-1)^n\pi \frac{d^n}{dz^n} \cot(\pi z) \\
%\end{align*}
%\end{proof}

\begin{lemma}
For all \( x \in \mathbb{R} \),
\[ \sec^2x - \csc^2x = -4\cot2x\csc2x. \]
\end{lemma}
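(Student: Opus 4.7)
The plan is to reduce the left-hand side to a single fraction and recognize the double-angle identities inside it. First I would write
\[
\sec^2 x - \csc^2 x = \frac{1}{\cos^2 x} - \frac{1}{\sin^2 x} = \frac{\sin^2 x - \cos^2 x}{\sin^2 x \cos^2 x},
\]
valid wherever both sides are defined.

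Next I would apply the two standard double-angle identities $\cos 2x = \cos^2 x - \sin^2 x$ and $\sin 2x = 2\sin x \cos x$, so that $\sin^2 x - \cos^2 x = -\cos 2x$ and $\sin^2 x \cos^2 x = \tfrac{1}{4}\sin^2 2x$. Substituting into the fraction above turns it into $-4\cos 2x / \sin^2 2x$, which factors cleanly as $-4\cot 2x\csc 2x$.

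Finally I would note that the identity as stated in the lemma reads "for all $x \in \mathbb{R}$", but of course one has to exclude the points where $\cos x$, $\sin x$, or $\sin 2x$ vanishes; on the common domain both sides agree, and this is the sense in which the identity is used later. The main thing to be careful about is not a deep obstacle but a bookkeeping one: checking the sign after replacing $\sin^2 x - \cos^2 x$ by $-\cos 2x$, since it is easy to invert the sign of the double-angle formula and end up with a spurious factor of $-1$.
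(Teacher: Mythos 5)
Your proof is correct and follows exactly the same route as the paper: combine the two fractions over $\sin^2 x \cos^2 x$, then apply the double-angle identities to obtain $-4\cot 2x\csc 2x$. Your added remark that the statement should exclude the points where $\sin x$, $\cos x$, or $\sin 2x$ vanishes is a fair (and correct) refinement of the lemma's ``for all $x\in\mathbb{R}$'' phrasing, but it does not alter the argument.
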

\begin{proof}
Using trigonometric identities,
\begin{align*}
\sec^2x - \csc^2x & = \frac{1}{\cos^2x} -\frac{1}{\sin^2x} \\
& = \frac{\sin^2x - \cos^2x}{\sin^2x\cos^2x} \\
& = \frac{-\cos2x}{\frac{1}{4}\sin^{2}2x} \\
& = -4\cot2x\csc2x.
\end{align*}
\end{proof}

\pagebreak

\section{Main Result}

\begin{theorem}
For all \( n \in \mathbb{R} \text{, with}\ n > 1 \text{,} \)
\begin{equation}
I_n = \int_{0}^{\infty} \frac{\ln{x}}{x^n+1} dx = -\frac{\pi^2}{n^2}\cot{\frac{\pi}{n}}\csc{\frac{\pi}{n}} = -\frac{d}{dn} \Bigg[ \Gamma\Big(1-\frac{1}{n}\Big) \Gamma\Big(\frac{1}{n}\Big) \Bigg].
\end{equation}
\end{theorem}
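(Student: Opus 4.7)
The strategy is to convert the integral on $(0,\infty)$ into one on the whole real line whose integrand matches the form appearing in Lemma 1, apply Lemmas 1--2 to evaluate it in terms of $\csc^2$, then simplify using Lemma 3.

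\textbf{Step 1: Exponential substitution.} I will substitute $x = e^{u/n}$ in $I_n$. This gives $\ln x = u/n$, $dx = (1/n)e^{u/n}du$, and $x^n+1 = e^u + 1$, yielding
\[
I_n = \frac{1}{n^2}\int_{-\infty}^{\infty} \frac{u\,e^{u/n}}{e^u+1}\,du.
\]
The integrand here still has $e^u+1$ in the denominator, whereas Lemma 1 requires the form $1-e^{-t}$.

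\textbf{Step 2: Split into Lemma-1-ready pieces.} The key algebraic identity is
\[
\frac{1}{e^u+1} \;=\; \frac{e^{-u}}{1+e^{-u}} \;=\; \frac{e^{-u}(1-e^{-u})}{1-e^{-2u}} \;=\; \frac{e^{-u}-e^{-2u}}{1-e^{-2u}},
\]
so multiplying by $e^{u/n}$ splits the numerator into two exponentials while producing the denominator $1-e^{-2u}$. A further substitution $v=2u$ then turns $1-e^{-2u}$ into $1-e^{-v}$, giving
\[
n^2 I_n \;=\; \frac{1}{4}\int_{-\infty}^{\infty}\frac{v\bigl[e^{-vz_1}-e^{-vz_2}\bigr]}{1-e^{-v}}\,dv, \qquad z_1=\tfrac{n-1}{2n},\quad z_2=\tfrac{2n-1}{2n}.
\]

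\textbf{Step 3: Apply the lemmas.} Using Lemma 1 with order $1$ on each term and then Lemma 2 (noting $\frac{d}{dz}\cot \pi z = -\pi\csc^2\pi z$), each integral evaluates to $\pi^2\csc^2(\pi z_i)$. Since $\pi z_1 = \pi/2-\pi/(2n)$ and $\pi z_2 = \pi-\pi/(2n)$, the co-function and supplement identities give $\csc^2(\pi z_1)=\sec^2(\pi/(2n))$ and $\csc^2(\pi z_2)=\csc^2(\pi/(2n))$. Hence
\[
n^2 I_n \;=\; \frac{\pi^2}{4}\left[\sec^2\tfrac{\pi}{2n} - \csc^2\tfrac{\pi}{2n}\right],
\]
and Lemma 3 with $x=\pi/(2n)$ finishes the trigonometric simplification, producing the target $-\pi^2\cot(\pi/n)\csc(\pi/n)$.

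\textbf{Step 4: The $\Gamma$-derivative form.} For the second equality, the reflection formula gives $\Gamma(1-\frac{1}{n})\Gamma(\frac{1}{n})=\pi/\sin(\pi/n)$, so a direct differentiation with respect to $n$ (applying the chain rule to $\pi/n$) yields $\frac{\pi^2}{n^2}\cot(\pi/n)\csc(\pi/n)$, matching $-I_n$. The main obstacle is Step 2: finding the right algebraic rewrite of $1/(e^u+1)$ so that the shifted-argument trick $v=2u$ converts the integrand into a difference of two canonical forms that Lemma 1 can digest, and such that the two arguments $z_1,z_2$ land on complementary/supplementary angles so that Lemma 3 applies cleanly.
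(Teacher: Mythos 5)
Your proposal is correct and follows essentially the same route as the paper: the substitution $x=e^{u/n}$ (equivalently $u=n\ln x$), the rewrite $\frac{1}{e^u+1}=\frac{e^{-u}(1-e^{-u})}{1-e^{-2u}}$, the rescaling $v=2u$ producing the arguments $z_1=\frac12-\frac{1}{2n}$ and $z_2=1-\frac{1}{2n}$, and then Lemmas 1--3 exactly as in the text. Your Step 4, verifying the $\Gamma$-derivative form via the reflection formula, is a small welcome addition, since the paper states that equality without computation.
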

\begin{proof}
To prove this identity, we will calculate \(I_n\) using the reflection formula of the polygamma function. First, we'll introduce the substitution
\[ u = n\ln{x}, \]
with 
\[ du = \frac{n}{x} dx. \]
Plugging these into (4) gives us
\[ I_n = \int_{0}^{\infty} \frac{\ln{x}}{x^n+1} dx = \frac{1}{n^2} \int_{-\infty}^{\infty} \frac{ue^\frac{u}{n}}{e^{u}+1} du. \]
To obtain a convenient form similar to that of the polygamma function, we will do some basic algebraic manipulation.
\begin{align*}
I_n & = \frac{1}{n^2} \int_{-\infty}^{\infty} \frac{ue^\frac{u}{n}}{e^{u}+1} du && \text{Multiply the integrand by}\ \frac{e^{-u}}{e^{-u}}. \\  
& = \frac{1}{n^2} \int_{-\infty}^{\infty} \frac{ue^{u(\frac{1}{n}-1)}}{1+e^{-u}} du  && \text{Multiply the integrand by}\ \frac{1-e^{-u}}{1-e^{-u}}. \\
& = \frac{1}{n^2} \int_{-\infty}^{\infty} \frac{ue^{u(\frac{1}{n}-1)}(1-e^{-u})}{1-e^{-2u}} du.
\end{align*}
Our original integral is now
\begin{equation}
I_n =  \frac{1}{n^2} \int_{-\infty}^{\infty} \frac{ue^{u(\frac{1}{n}-1)}(1-e^{-u})}{1-e^{-2u}} du.
\end{equation}
We will do the following substitution
\[ t = 2u, \]
with
\[ dt = 2du. \]
Plugging these into (5) yields
\begin{align*}
I_n & = \frac{1}{n^2} \int_{-\infty}^{\infty} \frac{ue^{u(\frac{1}{n}-1)}(1-e^{-u})}{1-e^{-2u}} du \\
& = \frac{1}{4n^2} \int_{-\infty}^{\infty} \frac{te^{\frac{t}{2}(\frac{1}{n}-1)}(1-e^{-\frac{t}{2}})}{1-e^{-t}} dt \\
& = \frac{1}{4n^2} \Bigg[ \int_{-\infty}^{\infty} \frac{te^{\frac{t}{2}(\frac{1}{n}-1)}}{1-e^{-t}} dt - \int_{-\infty}^{\infty} \frac{te^{\frac{t}{2}(\frac{1}{n}-2)}}{1-e^{-t}} dt \Bigg] \\
& = \frac{1}{4n^2} \Bigg[ \int_{-\infty}^{\infty} \frac{te^{t(\frac{1}{2n}-\frac{1}{2})}}{1-e^{-t}} dt - \int_{-\infty}^{\infty} \frac{te^{t(\frac{1}{2n}-1)}}{1-e^{-t}} dt \Bigg] \\
& = \frac{1}{4n^2} \Bigg[ \int_{-\infty}^{\infty} \frac{te^{-t(\frac{1}{2}-\frac{1}{2n})}}{1-e^{-t}} dt - \int_{-\infty}^{\infty} \frac{te^{-t(1-\frac{1}{2n})}}{1-e^{-t}} dt \Bigg].
\end{align*}
We end up with the form
\begin{equation}
I_n = \frac{1}{4n^2} \Bigg[ \int_{-\infty}^{\infty} \frac{te^{-t(\frac{1}{2}-\frac{1}{2n})}}{1-e^{-t}} dt - \int_{-\infty}^{\infty} \frac{te^{-t(1-\frac{1}{2n})}}{1-e^{-t}} dt \Bigg].
\end{equation}
Using Lemma 1, (6) becomes
\begin{equation}
I_n = \frac{1}{4n^2} \Bigg[ \psi^{(1)}\Big(\frac{1}{2}-\frac{1}{2n}\Big) + \psi^{(1)}\Big(\frac{1}{2} + \frac{1}{2n}\Big) - \psi^{(1)}\Big(1 - \frac{1}{2n}\Big) - \psi^{(1)}\Big(\frac{1}{2n}\Big) \Bigg].
\end{equation}
Using Lemma 2, (7) becomes
\begin{align}
I_n & = \frac{1}{4n^2} \Bigg[\pi^2 \csc^2{\Big(\frac{\pi}{2} + \frac{\pi}{2n}\Big)} - \pi^2 \csc^2{\Big(\frac{\pi}{2n}\Big)}\Bigg] \nonumber \\
& = \frac{\pi^2}{4n^2} \Bigg[\csc^2{\Big(\frac{\pi}{2} + \frac{\pi}{2n}\Big)} - \csc^2{\Big(\frac{\pi}{2n}\Big)}\Bigg] \nonumber \\
& = \frac{\pi^2}{4n^2} \Bigg[\sec^2{\Big(\frac{\pi}{2n}\Big)} - \csc^2{\Big(\frac{\pi}{2n}\Big)}\Bigg].
\end{align}
Using Lemma 3, (8) becomes
\[ I_n = -\frac{\pi^2}{n^2}\cot{\frac{\pi}{n}}\csc{\frac{\pi}{n}} =  -\frac{d}{dn} \Bigg[ \Gamma\Big(1-\frac{1}{n}\Big) \Gamma\Big(\frac{1}{n}\Big) \Bigg].  \]
\renewcommand\qedsymbol{\(\blacksquare\)}
\end{proof}

\pagebreak

\end{document}